\newtheorem{theorem}{Theorem}
\newtheorem{thm}{Theorem}
\newtheorem{lemma}[thm]{Lemma}
\theoremstyle{definition}
\theoremstyle{remark}
\numberwithin{thm}{section}
\DeclareMathAlphabet{\mathsfsl}{OT1}{cmss}{m}{sl}
\renewcommand{\phi}{\varphi}
\newcommand{\argmin}{\operatorname*{arg\; min}}
\newcommand{\diag}{\operatorname{diag}}
\newcommand{\dist}{\operatorname{dist}}
\newcommand{\bx}{\boldsymbol{x}}
\newcommand{\bZ}{\boldsymbol{Z}}
\newcommand{\bX}{\boldsymbol{X}}
\newcommand{\bD}{\boldsymbol{D}}
\newcommand{\sX}{\mathcal{X}}
\newcommand{\sY}{\mathcal{Y}}
\newcommand{\bU}{\boldsymbol{U}}
\newcommand{\bR}{\boldsymbol{R}}
\def\reals{\mathbb{R}}
\def\bx{\boldsymbol{x}}
\def\bS{\boldsymbol{S}}
\def\b0{\mathbf{0}}
\def\bU{\boldsymbol{U}}
\def\bC{\boldsymbol{C}}
\def\bA{\boldsymbol{A}}
\def\bY{\boldsymbol{Y}}
\def\bB{\boldsymbol{B}}
\def\bH{\boldsymbol{H}}
\def\bI{\mathbf{I}}
\def\tr{\mathrm{tr}}
\newcommand{\di}{{\,\mathrm{d}}} 
\begin{document}
\author{Teng Zhang}
%
\title{A Majorization-Minimization Algorithm for the Karcher Mean of Positive Definite Matrices}
%
%
%

\maketitle

\begin{abstract}An algorithm for computing the Karcher mean of $n$  positive definite matrices is proposed, based on the majorization-minimization (MM) principle. The proposed MM algorithm is parameter-free, does not need to choose step sizes, and has a theoretical guarantee of asymptotic linear convergence.\end{abstract}

\maketitle

\section{Introduction}
It is well-known that the geometric mean for a set of positive real numbers $(a_1,a_2,\cdots, a_n)$ is defined by $(a_1a_2\cdots a_n)^{\frac{1}{n}}$. However, this definition can not be naturally generalized to a set of positive definite matrices $\bA_1,\bA_2,\cdots,\bA_n\in\reals^{p\times p}$, since $(\bA_1\bA_2\cdots \bA_n)^{\frac{1}{n}}$ is usually not symmetric, and it is not invariant to permutation, that is, generally $(\bA_1\bA_2\bA_3\cdots \bA_n)^{\frac{1}{n}} = (\bA_2\bA_1\bA_3\cdots \bA_n)^{\frac{1}{n}}$ does not hold.

The Karcher mean~\cite[(6.24)]{bhatia2007positive}~\cite[Section 4]{JeurVV2012} is commonly used as the geometric mean of positive definite matrices, and it is defined by the optimization problem
%
\begin{equation}\label{eq:main}
\hat{\bX}=\argmin_{\bX\in \SS_+(p)}F(\bX),\,\,\,\text{where}\,\,\,F(\bX)=\sum_{i=1}^n \dist^2(\bA_i,\bX),
\end{equation}
where $\SS_+(p)$ represents the set of all $p\times p$ positive definite matrices, and \begin{equation}\dist(\bA,\bX)=\|\log(\bX^{-\frac{1}{2}}\bA\bX^{-\frac{1}{2}})\|_F.\label{eq:distance}\end{equation} Here $\|\bX\|_F$ denotes the Frobenius norm of $\bX$, and $\bX^{-\frac{1}{2}}$ and $\log\bX$ follow the standard definition of matrix functions~\cite{Higham2008}. 

The solution of \eqref{eq:main} is uniquely defined and satisfies a list of ``desirable properties'' for matrix geometric mean in~\cite[Section 1]{Ando2004}. We refer the reader to~\cite[Section 4]{JeurVV2012} for a more detailed discussion on the proof of its uniqueness, existence and other properties.



The optimization problem \eqref{eq:main} has been extensively investigated in the literature. For example, the gradient descent method has been applied in \cite{Ferreira2006,RenAbs2011}.  A linearization of the gradient descent method in the spirit of
the Richardson iteration is proposed in \cite{Bini2013}, and it is proved to converge locally. 
Another natural algorithm is Newton's method, which is considered in
\cite{Groisser2004,Ferreira2013} in the name of ``centroid computation''. A stochastic algorithm and a gradient descent method are proposed for the Riemannian $p$-means in \cite{Nielsen2013}, and when $p=2$ the Riemannian $p$-means is equivalent to the Karcher mean. A very comprehensive survey~\cite{JeurVV2012} presents several algorithms and their variants, including first-order methods such as the steepest descent method, the conjugate descent method, and second-order methods such as the trust region method and the BFGS method.

A common issue of these algorithms is the choice of step sizes in the update formula. While the line search strategy has a convergence guarantee, it is computationally expensive as observed in \cite{Bini2013}. On the other hand, while the strategy of using constant step sizes converges fast, it lacks theoretical guarantee on the convergence to the Karcher mean, unless the initialization is sufficiently close to the solution (see~\cite[Theorem 2.10]{Afsari2013} for the gradient descent method and \cite[Theorem 5.2]{Groisser2004} for Newton's method). The method of gradually decreasing step sizes in~\cite[Algorithm 3]{Fletcher2007} requires an initial step size, but it is unclear how one should choose this parameter such that the algorithm converges to the Karcher mean. A criterion of choosing step sizes is proposed in \cite{Bini2013}, but it only has a theoretical guarantee on local convergence (although it performs well empirically).

The main contribution of this paper is to present and analyze a majorization-minimization (MM) algorithm for solving~\eqref{eq:main}. Compared to previous methods, the MM algorithm is different and based on the majorization-minimization principle. It is parameter-free, does not need to do line search in each iteration, has asymptotic linear convergence to the Karcher mean.

The rest of the paper is organized as follows. Section~\ref{sec:background} describes the properties of the Karcher mean and the framework of MM algorithms. Section~\ref{sec:algorithm} proposes the MM algorithm for the Karcher mean and analyzes its property of convergence. Section~\ref{sec:simulations} compares the proposed MM algorithm with some previous algorithms under various settings.

\section{Background}\label{sec:background}

\subsection{MM algorithms}

Majorization-minimization (MM) is a principle of designing algorithms. While the name ``MM'' is proposed in recent works by Hunter and Lange~\cite{Hunter2000_2,MM_tutorial2004}, the idea has a long history. For example, the MM principle has been used in the analysis of Weiszfeld's algorithm~\cite{Weiszfeld1937} for finding the Euclidean median~\cite[Section 3.1]{Kuhn73}, and in the analysis of  iterative reweighted least square (IRLS) algorithms for sparse recovery and matrix completion~\cite{Daubechies_iterativelyreweighted,Fornasier_low-rankmatrix}. 

The framework of MM algorithms is as follows. To find $\argmin_{x\in\mathcal{A}}f(x)$, an MM algorithm is an iterative procedure given by
\begin{equation}\label{eq:minimization}
x_{k+1}=T(x_k),\,\,\,\,\text{where}\,\, T(y)=\arg\min_{x\in\mathcal{A}} g(x,y),
\end{equation}
and the majorization surrogate function $g(x,y)$ satisfies
\begin{equation}\label{eq:majorization}\text{$g(x,y)\geq f(x)$ and $g(y,y)=f(y)$.} \end{equation} 
We give a general statement on the convergence of MM algorithm in Theorem~\ref{thm:MM_convergence}.

\begin{theorem}\label{thm:MM_convergence}
If both $f(x)$ and $g(x,y)$ are differentiable with respect to $x$, $f$ is bounded from below, and $T$ is continuous, then for any accumulation point of the sequence $\{x_k\}_{k\geq 1}$, if it lies in the interior of $\mathcal{A}$, then it is a stationary point of $f(x)$.
\end{theorem}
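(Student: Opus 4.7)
The plan is to exploit the monotone descent built into the MM scheme and then transfer the minimization optimality of the surrogate $g(\cdot,x_k)$ at the accumulation point to stationarity of $f$ itself, using the tangency condition in \eqref{eq:majorization}.

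First I would establish the basic monotone decrease. Combining \eqref{eq:minimization} and \eqref{eq:majorization} gives
\begin{equation*}
f(x_{k+1}) \;\leq\; g(x_{k+1},x_k) \;\leq\; g(x_k,x_k) \;=\; f(x_k),
\end{equation*}
so $\{f(x_k)\}$ is nonincreasing. Since $f$ is bounded below, this sequence converges to some limit $L\in\R$. In particular $f(x_k)-f(x_{k+1})\to 0$.

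Next I would pass to the accumulation point. Let $x^\star$ be an accumulation point that lies in the interior of $\mathcal{A}$, and let $x_{k_j}\to x^\star$. Continuity of $T$ gives $x_{k_j+1}=T(x_{k_j})\to T(x^\star)$. Continuity of $f$ (which follows from differentiability) yields $f(x^\star)=\lim_j f(x_{k_j})=L$ and $f(T(x^\star))=\lim_j f(x_{k_j+1})=L$. The sandwich
\begin{equation*}
f(T(x^\star)) \;\leq\; g(T(x^\star),x^\star) \;\leq\; g(x^\star,x^\star) \;=\; f(x^\star) \;=\; f(T(x^\star))
\end{equation*}
then forces $g(T(x^\star),x^\star)=g(x^\star,x^\star)$, so $x^\star$ is itself a minimizer of the surrogate $x\mapsto g(x,x^\star)$ over $\mathcal{A}$.

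Finally I would convert this into stationarity of $f$. Since $x^\star$ is interior to $\mathcal{A}$ and $g$ is differentiable, the first-order condition gives $\nabla_x g(x^\star,x^\star)=\vct{0}$. On the other hand, the function $h(x):=g(x,x^\star)-f(x)$ is nonnegative by \eqref{eq:majorization} and vanishes at $x^\star$, so $x^\star$ is an unconstrained local minimizer of $h$, whence $\nabla h(x^\star)=\vct{0}$, i.e.\ $\nabla_x g(x^\star,x^\star)=\nabla f(x^\star)$. Combining the two identities gives $\nabla f(x^\star)=\vct{0}$, as required.

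The main subtlety, and the reason the interior hypothesis matters, is the last step: without interiority one only gets the variational inequality $\langle\nabla_x g(x^\star,x^\star),x-x^\star\rangle\geq 0$ for admissible directions, which translates to a stationarity condition for the constrained problem rather than $\nabla f(x^\star)=\vct{0}$. The rest of the argument (monotone descent and the squeeze that $x^\star$ minimizes $g(\cdot,x^\star)$) is routine; the only care is to invoke continuity of $T$ exactly once to identify the limits of $x_{k_j}$ and $x_{k_j+1}$.
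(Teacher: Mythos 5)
Your proposal is correct and follows essentially the same route as the paper's own proof: monotone descent from the sandwich $f(x_{k+1})\leq g(x_{k+1},x_k)\leq g(x_k,x_k)=f(x_k)$, convergence of $f(x_k)$ by boundedness below, continuity of $T$ and $f$ along a subsequence to force equality in the sandwich at the accumulation point, and then the tangency of $g(\cdot,x^\star)-f$ at $x^\star$ to transfer $\nabla_x g(x^\star,x^\star)=\vct{0}$ into $\nabla f(x^\star)=\vct{0}$. Your explicit remark about where interiority enters is a useful clarification the paper leaves implicit, but the argument itself is the same.
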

\begin{proof}
First of all, $f(x_k)$ is a nonincreasing sequence:
\begin{eqnarray}\label{eq:MM_descendence}
f(T(x_{k}))=f(x_{k+1})\leq g(x_{k+1},x_k)\leq g(x_k,x_k)=f(x_k).
\end{eqnarray}
Since $f$ is bounded from below, $f(x_k)$ converges. Therefore, $\lim_{k\rightarrow\infty}f(T(x_{k}))-f(x_{k})=0$. Applying the continuity of $f$ and $T$, for any converging subsequence of $\{x_k\}$, $\{x_{m_k}\}\rightarrow\hat{x}$, we have
$f(T(\hat{x}))=f(\hat{x})$, and the equality in (\ref{eq:MM_descendence}) holds if $x_k$ and $x_{k+1}$ are replaced by $\hat{x}$ and $T(\hat{x})$. Therefore, the second inequality in (\ref{eq:MM_descendence}) achieves equality, which means that $\hat{x}$ is a minimizer of $g(x, \hat{x})$. Since $g(x,\hat{x})-f(x)$ is minimized at $x=\hat{x}$, we have $f'({x})\big|_{x=\hat{x}}=g'(x,\hat{x})\big|_{x=\hat{x}}=0$.\end{proof}



The most important component of designing an MM algorithm is to find an appropriate surrogate function $g(x,y)$. A common choice of $g(x,y)$ is a square function, i.e., $c_1(y)x^2+c_2(y)x+c(y)$~\cite[Section 3.1]{Kuhn73,Daubechies_iterativelyreweighted,Fornasier_low-rankmatrix}, which gives a simple update formula in \eqref{eq:minimization}. However, in this paper we will use a surrogate function in the form of $\langle\bC_1(\bX'),\bX\rangle+\langle\bC_2(\bX'),\bX^{-1}\rangle+c_0(\bX')$, where $
\langle\bA,\bB\rangle= \sum_{i,j=1}^{p}\bA_{ij}\bB_{ij}=\tr(\bA\bB^T)$.

\subsection{Matrix derivatives}\label{sec:diff}
Since the analysis in this paper involves matrix derivatives, we review its definition and give several examples in this section. For more details on matrix derivatives, we refer the reader to~\cite{Bhatia1997}.

For a function $f: \reals^{p\times p}\rightarrow \reals$, the directional derivative $D f(\bX)(\bH)$ is defined by
\[
D f(\bX)(\bH)=\lim_{t\rightarrow 0} \frac{f(\bX+t\bH)-f(\bX)}{t}.
\]
We say $D f(\bX) =\bY$ if $D f(\bX)(\bH)=\langle \bY, \bH\rangle$.

Next we give some examples that will be used later. Since we work with symmetric matrices throughout the paper, we assume that the matrices $\bX$ and $\bA$ are symmetric in the following examples.

 A simple example is $f(\bX)=\tr(\bA\bX)$, for which we have $D f(\bX)=\bA$. 

 For $f(\bX)=\langle \bX^{-1}, \bA\rangle$, following a well-known result on the derivatives of matrix inverse~\cite{IMM2012-03274},
\begin{equation}D f(\bX) = -\bX^{-1}\bA\bX^{-1}.\label{eq:matrix_deri1}\end{equation}


For $f(\bX)=\|\log \bX\|_F^2$, applying the result in \cite[pg. 218]{bhatia2007positive}, we have $D f(\bX) = 2 \bX^{-1}\log\bX.$

For $f(\bX)=\|\bX\bA\|_F^2$, we have $D f(\bX) = 2 \bA\bX\bA$,
since
\begin{align*}
&\|(\bX+ t\bH)\bA\|_F^2-\|\bX\bA\|_F^2
=\tr\Big((\bX+ t\bH)\bA(\bX+ t\bH)\bA-\bX\bA\bX\bA\Big)
\\ = & 2t\,\tr\Big(\bX\bA\bH\bA\Big) + O(t^2)
=\langle2t\,\bA\bX\bA,\bH\rangle+ O(t^2) .
\end{align*}
\section{MM algorithm for computing the Karcher mean}\label{sec:algorithm}
We first present the majorization-minimization (MM) algorithm for \eqref{eq:main}:
\begin{equation}\label{eq:algorithm}
\bX_{k+1}=T(\bX_k)=f_2(\bX_k)^{\frac{1}{2}}\big(f_2(\bX_k)^{\frac{1}{2}}f_1(\bX_k)f_2(\bX_k)^{\frac{1}{2}}\big)^{-\frac{1}{2}}f_2(\bX_k)^{\frac{1}{2}},
\end{equation}
where
\[
f_1(\bX)=\sum_{i=1}^n\bA_i^{-\frac{1}{2}}g_1(\bA_i^{-\frac{1}{2}}\bX\bA_i^{-\frac{1}{2}})\bA_i^{-\frac{1}{2}}, \,\,\,g_1(x)=(\sqrt{\log x^2+1}+\log x)x^{-1},
\]
and
\[
f_2(\bX)=\sum_{i=1}^n\bA_i^{\frac{1}{2}}g_2(\bA_i^{-\frac{1}{2}}\bX\bA_i^{-\frac{1}{2}})\bA_i^{\frac{1}{2}},\,\,\,\text{ $g_2(x)=(\sqrt{\log x^2+1}-\log x)x$}.
\]

We follow the definition of the matrix functions in \cite{Higham2008}. Especially, for a symmetric matrix $\bX$, the matrix function $g(\bX)$ is defined as follows: Assume that the eigenvalue decomposition is given by $\bX=\bU\diag(\lambda_1,\lambda_2,\cdots,\lambda_p)\bU^T$, then
 \begin{equation}g(\bX)=\bU\diag\big(g(\lambda_1),g(\lambda_2),\cdots, g(\lambda_p)\big)\bU^T.\label{eq:matrixfunction}\end{equation}

We have the following theorem on the convergence of the  proposed MM algorithm:
\begin{thm}\label{thm:main}
The sequence $\{\bX_k\}_{k\geq 1}$ generated by~\eqref{eq:algorithm} converges to the solution of \eqref{eq:main}, and $\{F(\bX_k)\}_{k\geq 1}$ converges linearly asymptotically.
\end{thm}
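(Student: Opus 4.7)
\emph{Overview and scalar majorization.} The plan is to exhibit \eqref{eq:algorithm} as a bona fide MM iteration: I will produce a surrogate $g(\bX,\bX')\geq F(\bX)$, tight at $\bX=\bX'$, whose unique minimizer in $\bX$ is $T(\bX')$, and then combine Theorem~\ref{thm:MM_convergence} with a local rate estimate. For a single summand of $F$, changing variables to $\bB=\bA^{-1/2}\bX\bA^{-1/2}$ and $\bB'=\bA^{-1/2}\bX'\bA^{-1/2}$ turns the term into $\|\log\bB\|_F^2$. The core ingredient is the scalar inequality
\[
(\log t)^2 \leq g_1(s)\,t + g_2(s)/t + c(s),\qquad s,t>0,
\]
with equality at $t=s$, where $c(s)=(\log s)^2-2\sqrt{(\log s)^2+1}$. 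Matching of value and first derivative at $t=s$ uses $g_1(s)g_2(s)=1$. For the inequality itself, substitute $u=\log s$, $a=\log(t/s)$ and parametrise $u=\sinh b$; the inequality collapses to $\cosh(a+b)\geq 1$, which is immediate.

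\emph{Matrix lift, MM identification, and global convergence.} I would lift the scalar inequality without operator-monotonicity machinery by diagonalising $\bB'=\bU\,\mathrm{diag}(\mu_k)\,\bU^T$ and then $\bU^T\bB\bU=\bV\,\mathrm{diag}(\nu_\ell)\,\bV^T$. Because $\sum_k\bV_{k\ell}^2=\sum_\ell\bV_{k\ell}^2=1$, applying the scalar inequality to every pair $(\mu_k,\nu_\ell)$ with weights $\bV_{k\ell}^2$ and summing yields
\[
\|\log\bB\|_F^2 \leq \tr\bigl(g_1(\bB')\bB\bigr)+\tr\bigl(g_2(\bB')\bB^{-1}\bigr)+\tr\,c(\bB'),
\]
with equality at $\bB=\bB'$. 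Summing over $i$ and translating back to $\bX$ produces $g(\bX,\bX')=\langle f_1(\bX'),\bX\rangle+\langle f_2(\bX'),\bX^{-1}\rangle+\mathrm{const}$, which satisfies \eqref{eq:majorization}. Differentiating with the rules of Section~\ref{sec:diff}, the stationarity condition in $\bX$ is the Riccati equation $\bX f_1(\bX')\bX=f_2(\bX')$, whose unique positive-definite solution is precisely $T(\bX')$ in~\eqref{eq:algorithm}. Since each iterate is positive-definite by construction and $F$ is coercive on the PD cone, the sequence $\{\bX_k\}$ lies in a compact sublevel set; $T$ is continuous, so Theorem~\ref{thm:MM_convergence} gives that every accumulation point is a stationary point of $F$. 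The Karcher mean is the unique stationary point, hence $\bX_k\to\hat\bX$.

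\emph{Linear rate and main obstacle.} For the linear rate I would combine the descent $F(\bX_{k+1})\leq g(\bX_{k+1},\bX_k)\leq g(\hat\bX,\bX_k)$ from \eqref{eq:MM_descendence} with an upper bound of the form
\[
g(\hat\bX,\bX_k)-F(\hat\bX)\;\leq\;\rho\bigl(F(\bX_k)-F(\hat\bX)\bigr),\qquad \rho<1,
\]
which then yields the geometric decrease $F(\bX_{k+1})-F(\hat\bX)\leq\rho(F(\bX_k)-F(\hat\bX))$. This bound amounts to a Hessian comparison between $F$ and $g(\cdot,\bX')$ at $\bX'=\hat\bX$: both functions share the value and vanishing first variation of $F$ at $\hat\bX$, so the ratio of their second variations in $\bX'$ controls $\rho$. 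Making this quantitative is the main obstacle, because $F$ is only geodesically (not Euclidean) strictly convex, so the comparison must be done under the affine-invariant Riemannian metric or by direct second-order computation with the explicit functions $g_1,g_2$, producing an $\rho$ depending on the spectral spread of the $\bA_i^{-1/2}\hat\bX\bA_i^{-1/2}$. The second-most subtle point is the matrix lift in Step~2, where the convex-combination trick via the weights $\bV_{k\ell}^2$ is what makes the proof bypass operator-monotone calculus.
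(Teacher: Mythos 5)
Your surrogate construction and the global-convergence half of the argument are correct, and you reach the majorization inequality by a genuinely different route than the paper. The paper proves the key fact (Lemma~\ref{lemma:major1}: $\bX'$ minimizes $\langle g_1(\bX'),\bX\rangle+\langle g_2(\bX'),\bX^{-1}\rangle-\|\log\bX\|_F^2$) by direct matrix calculus: it writes the stationarity equation, shows uniqueness of its solution via concavity of an auxiliary function $h(\bZ)$ and monotonicity of $g_2$, and gets existence from coercivity. You instead establish the scalar inequality $(\log t)^2\leq g_1(s)t+g_2(s)/t+c(s)$ (your hyperbolic substitution is exactly the computation the paper defers to its ``Discussion of the majorization function,'' where $g_1g_2=1$ makes $g_1(z')e^{z}+g_2(z')e^{-z}=2\cosh(a+b)\geq 2$ the convexity condition) and lift it to matrices via the doubly stochastic weights $\bV_{k\ell}^2$ arising from the two eigenbases. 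That lift is valid, needs no operator convexity, and yields the additive constant $c_0(\bX')$ explicitly; it is arguably more elementary than the paper's Lemma~\ref{lemma:major1}, though it proves slightly less (the paper also gets that $\bX'$ is the \emph{unique} minimizer of the gap, which is not needed for the MM property). Your identification of the surrogate's minimizer with the Riccati solution $T(\bX')$ and the compactness-plus-continuity argument for $\bX_k\to\hat{\bX}$ coincide with the paper's (you should still note, as the paper does via operator convexity of $\bX^{-1}$, why the stationary point of the surrogate is in fact its minimizer).

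The linear rate is where your proposal has a genuine gap, and not only because you flag the Hessian comparison as unfinished: the inequality you aim for, $g(\hat{\bX},\bX_k)-F(\hat{\bX})\leq\rho\,(F(\bX_k)-F(\hat{\bX}))$ with $\rho<1$, is structurally too strong. Writing $h=g(\cdot,\bX_k)-F$, which vanishes to first order at $\bX_k$, a second-order bound along the geodesic gives $h(\hat{\bX})\leq\tfrac{M}{2}\dist(\bX_k,\hat{\bX})^2$, while geodesic strong convexity gives $F(\bX_k)-F(\hat{\bX})\geq\tfrac{n}{2}\dist(\bX_k,\hat{\bX})^2$; your $\rho$ is then $M/n$, and nothing forces $M<n$ (it would require the surrogate's geodesic curvature bound $C$ to satisfy $C<2n$, which is not guaranteed). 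The paper sidesteps this by lower-bounding the \emph{per-step decrease} instead: on the geodesic $L$ joining $\bX_k$ to $\hat{\bX}$ it applies the scalar Lemma~\ref{lemma:derivative} twice, obtaining $F(\bX_k)-F(\hat{\bX})\leq F_{\hat{\bX}}'(\bX_k)^2/2n$ from the lower curvature bound on $F$ (semiparallelogram law) and $F(\bX_k)-F(\bX_{k+1})\geq G(\bX_k,\bX_k)-\min_{\bX\in L}G(\bX,\bX_k)\geq G_{\hat{\bX}}'(\bX_k)^2/2C$ from the upper curvature bound on $G$ near $\hat{\bX}$; since the two first derivatives agree at $\bX_k$ (tangency of $G(\cdot,\bX_k)$ and $F$ there), this yields $F(\bX_{k+1})-F(\hat{\bX})\leq(1-n/C)\,(F(\bX_k)-F(\hat{\bX}))$, a contraction for any $C\geq n$. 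You should restructure your rate argument along these lines rather than comparing $g(\hat{\bX},\bX_k)$ to $F(\hat{\bX})$ directly.
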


The proof of Theorem~\ref{thm:main} is based on the following Lemmas. The proof of Theorem~\ref{thm:main} will be given in Section~\ref{sec:proof_main} and the proof of the Lemmas will be given in Section~\ref{sec:lemma}. 

\begin{lemma}\label{lemma:major}
There exists $c_0(\bX'): \reals^{p\times p}\rightarrow \mathbb{R}$ such that \begin{equation}\label{eq:define_G}G(\bX,\bX')= \langle f_1(\bX'),\bX \rangle + \langle f_2(\bX'),\bX^{-1} \rangle +c_0(\bX')\end{equation}
satisfies $G(\bX',\bX')=F(\bX')$ and $G(\bX,\bX')\geq F(\bX)$.
\end{lemma}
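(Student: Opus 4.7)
The plan is to reduce the desired matrix majorization to a scalar inequality via spectral decomposition, then lift and sum termwise. First I will prove the scalar inequality: for any $x,x'>0$,
\[
(\log x)^2 \leq g_1(x')\,x + g_2(x')\,x^{-1} + c(x'),
\]
with equality at $x=x'$, for an appropriately chosen scalar $c(x')$. Setting $t=\log x$ and using the identity $g_1(x')\,g_2(x')=1$ (which follows from $(\sqrt{\log^2 x'+1})^2-(\log x')^2=1$), the difference $\psi(t)=g_1(x')e^t+g_2(x')e^{-t}+c(x')-t^2$ is arranged so that $\psi(\log x')=0$, and direct computation gives $\psi'(\log x')=0$ using the identity $g_1(x')-g_2(x')/x'^2=2\log x'/x'$. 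For the global bound $\psi(t)\geq 0$, AM-GM applied to $g_1(x')e^t$ and $g_2(x')e^{-t}$ gives $\psi''(t)\geq 2\sqrt{g_1(x')g_2(x')}-2=0$, so $\psi$ is convex with minimum value $0$ attained at $t=\log x'$.

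Next I will lift this to a matrix inequality: for positive definite $\bY,\bY'$,
\[
\|\log \bY\|_F^2 \leq \tr(g_1(\bY')\,\bY) + \tr(g_2(\bY')\,\bY^{-1}) + \tr(c(\bY')),
\]
with equality at $\bY=\bY'$. Diagonalize $\bY'=\bU\,\mathrm{diag}(\lambda_j')\,\bU^T$ and $\bY=\bV\,\mathrm{diag}(\mu_k)\,\bV^T$, and let $w_{jk}$ denote the entries of the orthogonal matrix $\bU^T\bV$. Expanding the traces gives $\tr(g_1(\bY')\,\bY)=\sum_{j,k}g_1(\lambda_j')\,\mu_k\,w_{jk}^2$ and analogously for the $g_2$ term, while $\|\log \bY\|_F^2=\sum_k(\log\mu_k)^2=\sum_{j,k}w_{jk}^2(\log\mu_k)^2$ by the orthogonality $\sum_j w_{jk}^2=1$. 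Applying the scalar inequality to each pair $(\lambda_j',\mu_k)$ with weight $w_{jk}^2$ and summing produces the matrix bound; equality at $\bY=\bY'$ follows since $\bU^T\bV$ can then be taken to be the identity.

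Finally, apply this matrix inequality to $\bY_i=\bA_i^{-1/2}\bX\bA_i^{-1/2}$ and $\bY_i'=\bA_i^{-1/2}\bX'\bA_i^{-1/2}$ and sum over $i$. Cyclicity of the trace gives $\tr(g_1(\bY_i')\,\bY_i)=\langle \bA_i^{-1/2}g_1(\bY_i')\bA_i^{-1/2},\,\bX\rangle$ and $\tr(g_2(\bY_i')\,\bY_i^{-1})=\langle \bA_i^{1/2}g_2(\bY_i')\bA_i^{1/2},\,\bX^{-1}\rangle$, which summed over $i$ reproduce $\langle f_1(\bX'),\bX\rangle$ and $\langle f_2(\bX'),\bX^{-1}\rangle$; taking $c_0(\bX')=\sum_i\tr(c(\bY_i'))$ then yields $G(\bX,\bX')$ in the stated form. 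The main obstacle is the scalar step: the shape of $g_1$ and $g_2$ is engineered precisely so that $g_1g_2\equiv 1$, which is exactly what makes AM-GM reduce the global convexity check to a single line. Everything else is bookkeeping, with the orthogonality $\sum_j w_{jk}^2=1$ serving as the bridge from the pointwise scalar majorization to a matrix trace bound.
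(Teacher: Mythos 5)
Your proof is correct, and it reaches the key matrix inequality by a genuinely different route than the paper. The paper proves (its auxiliary Lemma~\ref{lemma:major1}) that $\bX'$ is the unique minimizer of $g_{\bX'}(\bX)=\langle g_1(\bX'),\bX\rangle+\langle g_2(\bX'),\bX^{-1}\rangle-\|\log\bX\|_F^2$ by a variational argument: it computes the matrix gradient (using the derivative of $\|\log\bX\|_F^2$ cited from Bhatia), shows the stationarity equation has a unique solution via a concave reformulation in the auxiliary variable $\bZ$, and adds a coercivity argument to guarantee that the minimizer exists; the majorization then follows from minimality, with $c_0(\bX')$ defined implicitly as $-\sum_i g_{\bA_i^{-1/2}\bX'\bA_i^{-1/2}}(\bA_i^{-1/2}\bX'\bA_i^{-1/2})$. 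You instead prove the scalar inequality $(\log x)^2\le g_1(x')x+g_2(x')x^{-1}+c(x')$ directly --- essentially the computation the paper relegates to its ``Discussion of the majorization function'' section as motivation, with AM--GM playing the role of Cauchy--Schwarz --- and then lift it to matrices through the doubly stochastic weights $w_{jk}^2$ coming from the orthogonal matrix $\bU^T\bV$, using $\sum_j w_{jk}^2=\sum_k w_{jk}^2=1$. This buys a more elementary and self-contained argument: no matrix calculus, no existence/coercivity step, and an explicit formula $c_0(\bX')=\sum_i\tr\big(c(\bA_i^{-1/2}\bX'\bA_i^{-1/2})\big)$. What you do not get (and do not need for Lemma~\ref{lemma:major}) is the paper's stronger conclusion that $\bX'$ is the \emph{unique} minimizer of $g_{\bX'}$, though a short extra argument tracking the equality cases of your scalar bound would recover it. The final step --- congruence by $\bA_i^{-1/2}$, cyclicity of the trace, and summation over $i$ --- is identical in both arguments.
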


\begin{lemma}\label{lemma:minimization}For any positive definite matrices $\bC_1,\bC_2\in\reals^{p\times p}$, the minimizer of  $\langle\bC_1,\bX\rangle+\langle\bC_2,\bX^{-1}\rangle$ is
$\bC_2^{\frac{1}{2}}(\bC_2^{\frac{1}{2}}\bC_1\bC_2^{\frac{1}{2}})^{-\frac{1}{2}}\bC_2^{\frac{1}{2}}.$
\end{lemma}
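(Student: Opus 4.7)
The plan is to characterize the minimizer via first-order optimality, verify the proposed closed form by direct substitution into the resulting matrix equation, and then upgrade stationarity to global optimality using convexity on the positive definite cone.

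First, I would apply the matrix derivative rules reviewed in Section~\ref{sec:diff}, namely $D\,\tr(\bC_1\bX)=\bC_1$ together with the identity~\eqref{eq:matrix_deri1}, to compute the differential of $h(\bX):=\langle\bC_1,\bX\rangle+\langle\bC_2,\bX^{-1}\rangle$ as
\[
Dh(\bX)=\bC_1-\bX^{-1}\bC_2\bX^{-1}.
\]
Setting $Dh(\bX)=\b0$ and rearranging immediately yields the Riccati-type equation $\bX\bC_1\bX=\bC_2$.

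Next, I would plug the candidate $\bX^\star:=\bC_2^{1/2}(\bC_2^{1/2}\bC_1\bC_2^{1/2})^{-1/2}\bC_2^{1/2}$ into the left-hand side of this equation. The two inverse-square-root factors then sandwich precisely one copy of $\bC_2^{1/2}\bC_1\bC_2^{1/2}$, which collapses to the identity, so that $\bX^\star\bC_1\bX^\star=\bC_2^{1/2}\,\bI\,\bC_2^{1/2}=\bC_2$. Hence $\bX^\star$ is a stationary point of $h$ on the positive definite cone (and is itself positive definite, so the domain restriction is genuine).

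Finally, to promote stationarity to a global minimum, I would invoke the standard fact that $\bX\mapsto\langle\bC_2,\bX^{-1}\rangle$ is strictly convex on the positive definite cone whenever $\bC_2\succ\b0$---a consequence of the operator convexity of $\bX\mapsto\bX^{-1}$ combined with taking a trace against a positive definite weight---while $\bX\mapsto\langle\bC_1,\bX\rangle$ is linear. Strict convexity of $h$ then forces uniqueness of the critical point, so $\bX^\star$ is the global minimizer. The only mild obstacle is making the implicit restriction of the domain to positive definite matrices explicit (it is forced anyway by the $\bX^{-1}$ term); beyond that, the proof is a short first-order calculation plus a convexity appeal.
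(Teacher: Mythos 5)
Your proposal is correct and follows essentially the same route as the paper: both derive the stationarity equation $\bC_1-\bX^{-1}\bC_2\bX^{-1}=\b0$ from the matrix-derivative rules of Section~\ref{sec:diff}, verify the closed form by direct substitution, and use the operator convexity of $\bX\mapsto\bX^{-1}$ traced against $\bC_2$ to conclude global optimality. Your added remarks on strict convexity and the implicit restriction to the positive definite cone are harmless refinements of the same argument.
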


\begin{lemma}\label{lemma:derivative}
For a twice differentiable function $f(x): \reals\rightarrow\reals$, we have:\\(a)If $f''(x)\geq \mu>0$ for all $x\in\reals$, then $f(x_0)-\min_{x\in\reals} f(x) \leq \frac{f'(x_0)^2}{2\mu}$. \\(b)If $f''(x)\leq L$ for all $x\in\reals$, then $f(x_0)-\min_{x\in\reals} f(x)\geq \frac{f'(x_0)^2}{2L}$.
\end{lemma}

The proof of Theorem~\ref{thm:main} also depends on $F(\bX)$ and $G(\bX,\bX')$ when evaluated on geodesic lines in $\SS_+(p)$. 
By~\cite[Theorem 6.1.6]{bhatia2007positive}, the geodesic line connecting $\bA$ and $\bB$ is parameterized by
\begin{equation}\label{eq:geodesic}
L(t)=\bA^{\frac{1}{2}}(\bA^{-\frac{1}{2}}\bB\bA^{-\frac{1}{2}})^{t}\bA^{\frac{1}{2}},\,\,\,\,\, t\in [0,1].
\end{equation}
In particular, this is an arc length parameterization when $\dist(\bA,\bB)=1$, i.e., $\|\log (\bA^{-\frac{1}{2}}\bB\bA^{-\frac{1}{2}})\|_F=1$. We will always use arc length parameterizations throughout the paper. The results on $F(L(t))$ and $G(L(t),\bX')$ are summarized as follows.
\begin{lemma}\label{lemma:derivative1}
(a)For any geodesic line $L\in\SS_+(p)$ and $t\in\reals$, $F''(L(t))\geq 2n$.\\(b) There exists $C>0$ such that for any geodesic line $L$ and $\bX'\in\SS_+(p)$ that satisfy $\dist(L(0),\hat{\bX})\leq 1$ and $\dist(\bX',\hat{\bX})\leq 1$, $\frac{\di^2}{\di t^2}G(L(t),\bX')\Big|_{t=0}<C$.
\end{lemma}

\subsection{Proof of Theorem~\ref{thm:main}}\label{sec:proof_main}

 By Lemmas~\ref{lemma:major} and~\ref{lemma:minimization}, the iterative procedure satisfies the definition of MM algorithm in \eqref{eq:minimization} and \eqref{eq:majorization}. Because $F$ is strictly  geodesically convex~\cite{JeurVV2012}, the minimizer $\hat{\bX}$ is the unique stationary point of $F(\bX)$. Applying Theorem~\ref{thm:MM_convergence} (which is applicable since both $F$ and $G$ are differentiable), 
\begin{equation}\label{eq:cc}\text{any converging subsequence of $\{\bX_k\}_{k\geq 1}$ converges to $\hat{\bX}$.}\end{equation}

By the monotonicity of MM algorithms in~\eqref{eq:MM_descendence}, $F(\bX_k)$ is nonincreasing and the sequence $\{\bX_k\}_{k\geq 1}$ is contained in the level set $\sX_0=\{\bX: F(\bX)\leq F(\bX_1)\}$. Let $\sY_t=\{\bx: \|\bX\|\leq t, \|\bX^{-1}\|\leq t\}$, then following the proof of~\cite[Theorem 2.4]{Bini2014}, $t$ can be chosen sufficiently large such that $F(\bX)>F(\bX_1)$ for all positive definite matrices $\bX$ such that $\bX\notin\sY_t$. As a result, $\sX_0\subseteq\sY_t$, and the sequence $\{\bX_k\}_{k\geq 1}$ is contained in $\sY_0$.

Since $\sY_0$ is a compact set (it is closed and bounded), every subsequence of $\{\bX_k\}_{k\geq 1}$ has a converging sub-subsequence, which converges to $\hat{\bX}$ according to \eqref{eq:cc}. Applying~\cite[Excercise 2.11.20]{thomson2008elementary}, $\{\bX_k\}_{k\geq 1}$ converges to $\hat{\bX}$.

Next we will show that the proposed MM algorithm converges linearly asymptotically.

Parametrize the geodesic line connecting $\bX_k$ and $\hat{\bX}$ by $L(t)$ such that $L(0)=\bX_k$, then by Lemma~\ref{lemma:derivative1}(a),  $F(L(t))''\geq 2n$ for all $t$. Applying Lemma~\ref{lemma:derivative}(a) to $F(L(t))$,  we have
\[
F(\bX_k)-F(\hat{\bX})=F(L(0))-\min_t F(L(t))\leq \frac{\di}{\di t}F(L(t))\Big|_{t=0}^2\Big/4n.
\]

Due to the convergence of $\{\bX_k\}_{k\geq 1}$, there exists $K$ such that for any $k>K$, $\dist(\bX_k,\hat{\bX})<1$. Applying Lemma~\ref{lemma:derivative1}(b) and Lemma~\ref{lemma:derivative}(b) to $G(L(0),\bX_k)$, for $k>K$,
\begin{align*}
F(\bX_k)-F(\bX_{k+1})\geq & G(\bX_k,\bX_k)-\min_{\bX\in \SS_+(p)}G(\bX,\bX_k)
\geq G(L(0),\bX_k)-\min_{t\in\reals }G(L(t),\bX_k)
\\\geq & \frac{\di}{\di t}G(L(t),\bX_k)\Big|_{t=0}^2\Big/2C,
\end{align*}
where the first inequality applies \eqref{eq:MM_descendence}, and $C$ is the positive constant obtained in Lemma 3.4 (with $\bX_k$ replacing $\bX'$).

By Lemma~\ref{lemma:major}, $G(L(t),\bX_k)-F(L(t))$ is minimized at $t=0$, which implies $\frac{\di}{\di t}F(L(t))\Big|_{t=0}=\frac{\di}{\di t}G(L(t),\bX_k)\Big|_{t=0}$. Therefore, $F(\bX_k)-F(\bX_{k+1})\geq \frac{2n}{C}(F(\bX_k)-F(\hat{\bX}))$ and \begin{equation}\label{eq:convergencerate}F(\bX_{k+1})-F(\hat{\bX})\leq (1-\frac{2n}{C})(F(\bX_k)-F(\hat{\bX})).\end{equation} Since $\hat{\bX}$ is the minimizer of $F(\bX)$, we have $F(\bX_{k+1})-F(\hat{\bX})\geq 0$ and $F(\bX_{k})-F(\hat{\bX})\geq 0$. Combining it with \eqref{eq:convergencerate}, we have that $1-\frac{2n}{C}\geq 0$. Since both $n$ and $C$ are nonnegative, $0\leq 1-\frac{2n}{C}<1$. Then \eqref{eq:convergencerate} implies that $F(\bX_k)$ converges linearly to $F(\hat{\bX})$  for $k>K$.

\subsection{Proof of  Lemmas}\label{sec:lemma}
\subsubsection{Proof of Lemma~\ref{lemma:major}}\label{sec:lemma1}

We start with the following auxiliary lemma and its proof:
\begin{lemma}\label{lemma:major1}
$\bX'$ is the unique minimizer of \begin{align}\label{eq:major1}
g_{\bX'}(\bX)=&\big\langle g_1(\bX'),\bX\big\rangle +
\big\langle g_2(\bX'),\bX^{-1}\big\rangle
- \|\log\bX\|_F^2\nonumber
\end{align}
over the set $\SS_+(p)$.
\end{lemma}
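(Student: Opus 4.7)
The plan is to verify $\bX'$ is a critical point of $g_{\bX'}$, and then use a Riemannian-exponential substitution together with a matrix arithmetic--geometric (AM--GM) inequality to establish it as the unique global minimum. For stationarity, the derivative formulas from Section~\ref{sec:diff} give $D g_{\bX'}(\bX) = g_1(\bX') - \bX^{-1} g_2(\bX') \bX^{-1} - 2 \bX^{-1}\log\bX$; at $\bX = \bX'$ every matrix on the right commutes with $\bX'$, so the gradient vanishes eigenvalue-by-eigenvalue by the scalar identity $g_1(x) g_2(x) \equiv 1$ together with the definitions of $g_1, g_2$.

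For global minimality, I would set $\bL := \log\bX'$, $\bA := \sqrt{\bL^2 + \bI}$, and $\bP := \bA + \bL$, so $\bP^{-1} = \bA - \bL$ follows from $\bA^2 - \bL^2 = \bI$, $g_1(\bX') = \bP \bX'^{-1}$, and $g_2(\bX') = \bP^{-1} \bX'$. Substituting $\bX = \bX'^{1/2} e^{\bZ} \bX'^{1/2}$ for symmetric $\bZ$ (the Riemannian exponential at $\bX'$) and using cyclicity of the trace gives
\[
g_{\bX'}(\bX) - g_{\bX'}(\bX') = \tr(\bP \phi(\bZ)) + \tr(\bP^{-1}\phi(-\bZ)) + 2\tr(\bL\bZ) + \|\bL\|_F^2 - \|\log\bX\|_F^2,
\]
where $\phi(\bZ) := e^{\bZ} - \bI - \bZ \succeq 0$. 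The identity $\bP\bP^{-1} = \bI$ plays the role of the scalar identity $bc=1$ in the one-dimensional analogue, where the Taylor bound $\phi(u)\phi(-u) \geq u^4/4$ combined with AM--GM closes the inequality $b\phi(u) + c\phi(-u) \geq u^2$. In the matrix case, Lemma~\ref{lemma:minimization} with $\bC_1 = \phi(\bZ)$, $\bC_2 = \phi(-\bZ)$ yields
\[
\tr(\bP\phi(\bZ)) + \tr(\bP^{-1}\phi(-\bZ)) \geq 2\tr\bigl((\phi(\bZ)\phi(-\bZ))^{1/2}\bigr),
\]
and the scalar bound lifts by functional calculus (since $\phi(\bZ)$ and $\phi(-\bZ)$ commute as matrix functions of $\bZ$) to $\phi(\bZ)\phi(-\bZ) \succeq \bZ^4/4$, whence $(\phi(\bZ)\phi(-\bZ))^{1/2} \succeq \bZ^2/2$ by operator monotonicity of the square root.

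The main obstacle is that tracing the last display yields only $\tr(\bP\phi(\bZ)) + \tr(\bP^{-1}\phi(-\bZ)) \geq \|\bZ\|_F^2$, which combined with the substitution gives $g_{\bX'}(\bX) - g_{\bX'}(\bX') \geq \|\bL+\bZ\|_F^2 - \|\log\bX\|_F^2$; the inequality $\|\bL+\bZ\|_F^2 \geq \|\log\bX\|_F^2$ holds with equality when $\bL$ and $\bZ$ commute but can fail otherwise (a direct $2\times 2$ example with $\bL$ diagonal and $\bZ$ anti-diagonal confirms this). To close the gap I would refine the matrix AM--GM step so as to retain the $\bP$-dependence: diagonalizing $\bZ = \bV\diag(z_k)\bV^T$ and writing $\tilde p_k := (\bV^T \bP \bV)_{kk}$, $\tilde q_k := (\bV^T \bP^{-1} \bV)_{kk}$ (with $\tilde p_k \tilde q_k \geq 1$ by Cauchy--Schwarz applied to the $k$th column of $\bV$), a weighted scalar AM--GM gives the strictly stronger bound $\tr(\bP\phi(\bZ)) + \tr(\bP^{-1}\phi(-\bZ)) \geq \sum_k z_k^2 \sqrt{\tilde p_k \tilde q_k}$. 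The excess factors $\sqrt{\tilde p_k \tilde q_k} \geq 1$ encode precisely the non-commutativity of $\bL$ and $\bZ$ that is needed to dominate $\|\log\bX\|_F^2 - \|\bL+\bZ\|_F^2$, and strict inequality unless $\bZ = 0$ then yields uniqueness of the minimizer.
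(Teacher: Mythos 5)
Your proposal takes a genuinely different route from the paper's, and it has a real gap at its decisive step. The paper never attempts a global inequality of the kind you pursue: it first shows that $\bX'$ is the \emph{unique} stationary point of $g_{\bX'}$ (rewriting the stationarity equation as the critical-point equation of a strictly concave function of the auxiliary variable $\bZ=g_2(\bX')$, so that for each fixed $\bX$ the equation forces $\bZ=g_2(\bX)$, and then inverting the monotone scalar function $g_2$ to get $\bX=\bX'$), and then establishes existence of a minimizer by coercivity ($g_{\bX'}(\bX)\to\infty$ as $\lambda_1(\bX)\to\infty$ or $\lambda_p(\bX)\to 0$). Uniqueness of the stationary point plus existence of a minimizer yields the lemma without ever comparing $g_{\bX'}(\bX)$ to $g_{\bX'}(\bX')$ directly.

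Your computation up to the display for $g_{\bX'}(\bX)-g_{\bX'}(\bX')$ is correct, and you correctly diagnose that the unrefined AM--GM bound $\tr(\bP\phi(\bZ))+\tr(\bP^{-1}\phi(-\bZ))\ge\|\bZ\|_F^2$ cannot close the argument: by the exponential-metric-increasing property one has $\|\log\bX\|_F=\dist(e^{-\bL},e^{\bZ})\ge\|\bL+\bZ\|_F$ always, with equality only when $\bL$ and $\bZ$ commute, so the residual term has the wrong sign whenever they do not. The problem is that your proposed repair is asserted, not proved. The inequality
\[
\sum_k z_k^2\bigl(\sqrt{\tilde p_k\tilde q_k}-1\bigr)\;\ge\;\|\log\bX\|_F^2-\|\bL+\bZ\|_F^2
\]
is exactly where the entire difficulty of the lemma now resides, and you offer nothing beyond the observation that both sides vanish in the commuting case and the left side is nonnegative. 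The right-hand side is controlled by iterated commutators of $\bL$ and $\bZ$ entering $\log(e^{\bL/2}e^{\bZ}e^{\bL/2})$; the left-hand side depends only on the diagonal entries of $\bV^T\bP^{\pm 1}\bV$ in the eigenbasis of $\bZ$. These are two different measures of non-commutativity, and nothing in the proposal rules out the right side exceeding the left for some $(\bL,\bZ)$; moreover the claimed strictness needed for uniqueness inherits the same gap. Until that inequality is established, or the strategy is replaced by something like the paper's stationarity-plus-coercivity argument, the proof is incomplete.
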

\begin{proof}
The proof can be divided into two steps. In the first step, we will show that $\bX'$ is the unique stationary point of $g_{\bX'}(\bX)$. In the second step, we will show that $\bX'$ is the unique minimizer of $g_{\bX'}(\bX)$.

We start with the first step. Applying the matrix derivatives in Section~\ref{sec:diff}, $g_{\bX'}(\bX)$ is differentiable and the derivative with respect to $\bX$ is
\[
g_1(\bX')-\bX^{-1}g_2(\bX')\bX^{-1}-2\bX^{-1}\log\bX.
\]
Let $\bZ=g_2(\bX')$ and apply $g_1(\bX')=g_2(\bX')^{-1}$ (which follows from $g_1(x)=g_2(x)^{-1}$ and the definition of matrix function), the equation for the stationary point of $g_{\bX'}(\bX)$ is given by
\begin{equation}\label{eq:stationary}
\bZ^{-1}-\bX^{-1}\bZ\bX^{-1}-2\bX^{-1}\log\bX=0.
\end{equation}

Apply the matrix derivatives in Section~\ref{sec:diff}, the LHS of \eqref{eq:stationary} is the derivative of
\[
h(\bZ)=\log\det(\bZ)-\frac{1}{2}\|\bZ\bX^{-1}\|_F^2-\tr(2\bX^{-1}\log\bX\bZ)
\]
with respect to $\bZ$. $h(\bZ)$ is concave with respect to $\bZ$: $\log\det(\bZ)$ is concave with respect to $\bZ$~\cite[Section 3.1.5]{boyd2004convex}, $\tr(2\bX^{-1}\log\bX\bZ)$ is a linear function with respect to $\bZ$, and $\|\bZ\bX^{-1}\|_F^2$ is convex respect to $\bZ$. Indeed, we can prove the convexity of  $\|\bZ\bX^{-1}\|_F^2$ as follows: let $\bZ_1$ and $\bZ_2$ be two arbitrary matrices in $\reals^{p\times p}$, then
\begin{align*}
&\|\bZ_1\bX^{-1}\|_F^2+\|\bZ_2\bX^{-1}\|_F^2-2\Big\|\frac{\bZ_1+\bZ_2}{2}\bX^{-1}\Big\|_F^2\\
=&\langle
\bZ_1^2,\bX^{-2}\rangle+\langle
\bZ_2^2,\bX^{-2}\rangle-2\Big\langle
\Big(\frac{\bZ_1+\bZ_2}{2}\Big)^2,\bX^{-2}\Big\rangle
=2\Big\langle
\Big(\frac{\bZ_1-\bZ_2}{2}\Big)^2,\bX^{-2}\Big\rangle
\\=&2\Big\|\frac{\bZ_1-\bZ_2}{2}\bX^{-1}\Big\|_F^2\geq 0.
\end{align*}
This established the ``midpoint convexity'' of $\|\bZ\bX^{-1}\|_F^2$, which is defined as follows: for any function $f(x)$, midpoint convexity means $f(x_1)+f(x_2)\geq 2f(\frac{x_1+x_2}{2})$ for all $x_1$ and $x_2$.

Following the proof of~\cite[Theorem 1.1.4]{niculescu2006convex}, for any continuous function, the midpoint convexity is equivalent to convexity. since $\|\bZ\bX^{-1}\|_F^2$ is a continuous function and has midpoint convexity, it is a convex function.

Applying the concavity of $h(\bZ)$, its stationary point is unique. That is, when $\bX$ is given, there is a unique $\bZ$ such that \eqref{eq:stationary} holds. By calculation, it is easy to verify that this unique solution is $\bZ=g_2(\bX)$. Therefore, any $(\bX,\bZ)$ satisfying \eqref{eq:stationary} satisfies $\bZ=g_2(\bX)$. Next we will prove $\bX=\bX'$ by combining it with $\bZ=g_2(\bX')$.

Since $g_2(x)'=\Big(1-\frac{1}{\sqrt{\log^2 x+1}}\Big)\Big(\sqrt{\log^2 x+1}-\log x\Big)\geq 0$
and $g_2(x)'=0$ holds only when $x=1$, $g_2(x)$ is monotonically increasing and $g_2^{-1}(x)$ is uniquely defined. Denote the $p$ eigenvalues of a matrix $\bX\in\SS_+(p)$ by $\lambda_1(\bX)\geq \lambda_2(\bX)\geq \cdots \geq \lambda_p(\bX)$, then by~\cite[page 526]{meyer2000matrix}, $g_2(\bX)=\bZ$ implies that $\bX$ and $\bZ$ have the same set of eigenvectors and $\lambda_i(\bZ)=g_2(\lambda_i(\bX))$ for all $1\leq i\leq p$. Since $g_2^{-1}$ is uniquely defined, the eigenvalues and the eigenvectors of $\bX$ are both uniquely defined. Therefore, the solution to $\bZ=g_2(\bX)$ is uniquely given by $\bX=\bX'$, and the unique solution to \eqref{eq:stationary} is given by $\bX=\bX'$. This concludes the first step of the proof.

In the second step of the proof, we will show that $g_{\bX'}(\bX)$ goes to $\infty$ when $\lambda_1(\bX)\rightarrow\infty$ or $\lambda_p(\bX)\rightarrow 0$. Indeed, let $c_1=\lambda_p(g_1(\bX'))$ and $c_2=\lambda_p(g_2(\bX'))$, then it can be proved by combining \[g_{\bX'}(\bX)\geq c_1\tr(\bX)+c_2\tr(\bX^{-1})-\|\log(\bX)\|_F^2=\sum_{i=1}^p(c_1\lambda_i(\bX)-c_2/\lambda_i(\bx)-\log^2\lambda_i(\bX)),\] and the fact that for any $c_1, c_2>0$, $c_1x+c_2x^{-1}-\log^2 x\rightarrow \infty$ when $x\rightarrow 0$ or $x\rightarrow\infty$.

Since $g_{\bX'}(\bX)$ is a continuous function, there exists $M,m>0$ such that the minimizer of $g_{\bX'}(\bX)$ is in the set $\{\bX\in\SS_+(p): \lambda_1(\bX)\leq M, \lambda_p(\bX)\geq m\}$. This set is compact because $\lambda_1(\bX)$ and $\lambda_p(\bX)$ are continuous functions with respect to $\bX$ (which can be proved by applying the Bauer-Fike Theorem~\cite{Bauer1960}). Recall that this compact set has only one stationary point, this stationary point is also the unique minimizer of $g_{\bX'}(\bX)$. That is, $\bX=\bX'$ is the unique minimizer of $g_{\bX'}(\bX)$.\end{proof}

\begin{proof}[Proof of Lemma~\ref{lemma:major}]
Applying Lemma~\ref{lemma:major1}, we have
 \begin{equation}\label{eq:major6}
 g_{\bX'}(\bX)-g_{\bX'}(\bX')\geq 0,\,\,\,g_{\bX'}(\bX')-g_{\bX'}(\bX')=0.
 \end{equation}Replace $\bX$, $\bX'$ in \eqref{eq:major6} by $\bA_i^{-\frac{1}{2}}\bX\bA_i^{-\frac{1}{2}}$, $\bA_i^{-\frac{1}{2}}\bX'\bA_i^{-\frac{1}{2}}$, and summing it over $1\leq i\leq n$, we proved Lemma~\ref{lemma:major} with $c_0(\bX')$ in \eqref{eq:define_G} defined by
 \begin{equation}\label{eq:major7}
 c_0(\bX')=-\sum_{i=1}^n g_{\bA_i^{-\frac{1}{2}}\bX'\bA_i^{-\frac{1}{2}}}(\bA_i^{-\frac{1}{2}}\bX'\bA_i^{-\frac{1}{2}}).
 \end{equation}
\end{proof}


%

\subsubsection{Proof of Lemma~\ref{lemma:minimization}}\label{sec:lemma2}

Since $\bX^{-1}$ is operator convex~\cite[Theorem 2.6]{Carlen2010}, i.e.,
$(\bX+\bY)^{-1}+(\bX-\bY)^{-1}- 2\bX^{-1}$
 is positive definite, $\langle\bC_2,\bX^{-1}\rangle$ is midpoint convex:
\begin{align*}
&\langle\bC_2,(\bX+\bY)^{-1}\rangle + \langle\bC_2,(\bX-\bY)^{-1}\rangle - 2\,\langle\bC_2,\bX^{-1}\rangle
\\=& \langle\bC_2,(\bX+\bY)^{-1}+(\bX-\bY)^{-1} -2 \bX^{-1} \rangle\geq 0,
\end{align*}
where the last inequality applies the property that for any two positive semidefinite matrices $\bA,\bB$,  $\langle\bA,\bB\rangle\geq 0$.
Following the proof of~\cite[Theorem 1.1.4]{niculescu2006convex}, $\langle\bC_2,\bX^{-1}\rangle$ is convex.

Since $\langle\bC_1,\bX\rangle$ is a linear function about $\bX$, $\langle\bC_1,\bX\rangle+\langle\bC_2,\bX^{-1}\rangle$ is convex and the unique minimizer is the root of its derivative, i.e., the solution to \begin{equation}\label{eq:matrix_deri0}
\bC_1-\bX^{-1}\bC_2\bX^{-1}=0.
\end{equation}

Lemma~\ref{lemma:minimization} is then proved by verifying that $\bX=\bC_2^{\frac{1}{2}}(\bC_2^{\frac{1}{2}}\bC_1\bC_2^{\frac{1}{2}})^{-\frac{1}{2}}\bC_2^{\frac{1}{2}}$ satisfies \eqref{eq:matrix_deri0}.
%

\subsubsection{Proof of Lemma~\ref{lemma:derivative}}
(a) When $f''(x)\geq \mu>0$, $f$ is a strongly convex function. Assume $x^*=\argmin_{x\in\reals} f(x)$, applying~\cite[Theorem 2.1.10]{nesterov2004introductory} with $f'(x^*)=0$, we have
\[
f(x_0)-f(x^*)\leq\langle f'(x^*),x_0-x^*\rangle +\frac{1}{2\mu}\|f'(x_0)-f'(x^*)\|^2=\frac{1}{2\mu}\|f'(x_0))\|^2.
\]

(b) When $f''(x)\leq L$, $f(x)$ satisfies~\cite[equation (2.1.6)]{nesterov2004introductory}. Then applying~\cite[(2.1.7)]{nesterov2004introductory} to $x_0$ and $x^*$ with $f'(x^*)=0$, we have
\[
f(x_0)-f(x^*)\geq \langle f'(x^*),x_0-x^*\rangle +\frac{1}{2L}\|f'(x_0)-f'(x^*)\|^2=\frac{1}{2L}f'(x_0)^2.
\]
\subsubsection{Proof of Lemma~\ref{lemma:derivative1}}
(a) 
Applying the semiparallelogram law \cite[(6.16)]{bhatia2007positive}, for any $1\leq i\leq n$,
\begin{align*}
&\dist^2(\bA_i,L(t+\epsilon))-2\dist^2(\bA_i,L(t))+\dist^2(\bA_i,L(t-\epsilon))
\\\geq& \frac{1}{2}\dist^2(L(t+\epsilon),L(t-\epsilon)) = 2\epsilon^2.
\end{align*}
The lemma can be proved by combining it with $F(\bX)=\sum_{i=1}^n\dist^2(\bA_i,\bX)$ and
\[F''(L(t))=\lim_{\epsilon\rightarrow 0}\frac{F(L(t+\epsilon))-2F(L(t))+F(L(t-\epsilon))}{\epsilon^2}\]
(b) Parameterize all geodesic lines by $L(t)=L_{\bX,\xi}(t)=\bX^{\frac{1}{2}}\exp(t\xi)\bX^{\frac{1}{2}}$, where $\bX\in\SS_+(p)$ and $\xi\in\reals^{p\times p}$ satisfies $\|\xi\|_F=1$ (so that $L(t)$ is an arc length parameterization), we will show that $\frac{\di^2}{\di t^2}G(L_{\bX,\xi}(t),\bX')\Big|_{t=0}$ is a continuous function with respect to $\bX, \xi$ and $\bX'$ by showing that this property holds for both $G_1(L_{\bX,\xi}(t),\bX')=\langle f_1(\bX'), L_{\bX,\xi}(t) \rangle$ and $G_2(L_{\bX,\xi}(t),\bX')=\langle f_2(\bX'), L_{\bX,\xi}(t) \rangle$.

Let $G_1(L_{\bX,\xi}(t),\bX')=\langle f_1(\bX'), L_{\bX,\xi}(t) \rangle$, then by definition,
\begin{align*}
\frac{\di^2}{\di t^2} G_1(L_{\bX,\xi}(t),\bX')\Big|_{t=0}=&\lim_{t\rightarrow 0}\frac{\langle f_1(\bX'), L_{\bX,\xi}(t)-2L_{\bX,\xi}(0)+L_{\bX,\xi}(-t) \rangle}{t^2}
\\=&\lim_{t\rightarrow 0}\frac{\langle f_1(\bX'), \bX^{\frac{1}{2}}(\exp(t\xi)-2\bI+\exp(-t\xi))\bX^{\frac{1}{2}}\rangle}{t^2}.
\end{align*}
Applying the Taylor expansion $\exp(t\xi)=\bI+t\xi+\frac{t^2}{2}\xi^2+o(t^2)$, the derivative is ${\langle f_1(\bX'), \bX^{\frac{1}{2}}\xi^2\bX^{\frac{1}{2}}\rangle},$
which is well-defined and continuous with respect to $\bX, \xi$ and $\bX'$.

Similarly we can prove the same property for $G_2(L_{\bX,\xi}(t),\bX')$ and therefore $\frac{\di^2}{\di t^2}G(L_{\bX,\xi}(t),\bX')\Big|_{t=0}$ is a continuous function with respect to $\bX, \xi$ and $\bX'$.

Recall that $\hat{\bX}$ defined in \eqref{eq:main} is the minimizer of $F(\bX)$, the set of all parameters $(\bX,\bX',\xi)$ that satisfy the assumptions in Lemma~\ref{lemma:derivative1}(b) is given by $\{(\bX,\bX',\xi)$:  $\dist(\bX,\hat{\bX})\leq 1, \dist(\bX',\hat{\bX})\leq 1, \|\xi\|_F=1\}$, which is a compact set. Combining the compactness with the continuity of $\frac{\di}{\di t}G(L_{\bX,\xi}(t),\bX')\Big|_{t=0}$ with respect to  $\{\bX,\bX',\xi\}$, part (b) is proved.

\subsection{Discussion of the majorization function}
 First, we explain why Lemma~\ref{lemma:major1} is important for the choice of the majorization function of $F(\bX)$: Assume that the majorization function is in the form of
 \begin{equation}\label{eq:majorr}\langle\bC_1,\bX\rangle+\langle\bC_2,\bX^{-1}\rangle+c_0,\end{equation} then a natural idea is to find a majorization function in the form of \eqref{eq:majorr} for each component of $F(\bX)$, i.e., $\|\log(\bA^{-\frac{1}{2}}\bX\bA^{-\frac{1}{2}})\|_F^2$. Let $\bY=\bA^{-\frac{1}{2}}\bX\bA^{-\frac{1}{2}}$, then it is equivalent to find a majorizing function of $\|\log \bY\|_F^2$ in the form of 
 \begin{equation}\langle\bA^{\frac{1}{2}}\bC_1\bA^{\frac{1}{2}},\bY\rangle+\langle\bA^{-\frac{1}{2}}\bC_2\bA^{-\frac{1}{2}},\bY^{-1}\rangle+c_0.\end{equation}
If Lemma~\ref{lemma:major1} holds, then it is clear that $\bA^{\frac{1}{2}}\bC_1\bA^{\frac{1}{2}}=g_1(\bY')$ and $\bA^{-\frac{1}{2}}\bC_2\bA^{-\frac{1}{2}}=g_2(\bY')$ would suffice.

Therefore, the problem has been reduced to finding $g_1(\bX)$ and $g_2(\bX)$ such that Lemma~\ref{lemma:major1} holds. Actually, $g_1$ and $g_2$ in Lemma~\ref{lemma:major1} is motivated by the analysis of the case $p=1$.

When $p=1$, the goal is to choose  $g_1(x)$ and $g_2(x)$ such that $x'$ is the unique minimizer of $g_1(x')x + g_2(x')/x - \log^2x.$ Let $z=\log x$ and $z'=\log x'$, then it is equivalent to find $g_1(z)$ and $g_2(z)$ such that $z'$ is the unique minimizer of
\[
g_0(z) = g_1(z')e^z+g_2(z')e^{-z}-z^2.
\]

To achieve the goal, it suffices to have $g_0'(z')=0$ and $g_0''(z)\geq 0$ for all $z\in\reals$, that is,  \begin{equation}\label{eq:1d_2}
g_1(z')e^{z'}-g_2(z')e^{-z'}-2z'=0,\,\,\,\,\, g_1(z')e^{z}+g_2(z')e^{-z}\geq 2.
\end{equation}
By the Cauchy-Schwartz inequality, the second equation in \eqref{eq:1d_2} is satisfied when $g_1(z')g_2(z') = 1.$ Combining it with the first equation in \eqref{eq:1d_2}, we have
\[
g_1(z')=e^{-z'}(\sqrt{z'^2+1}+z'),\,\,\,g_2(z')=e^{z'}(\sqrt{z'^2+1}-z').
\]
Plug in $z'=\log x'$, we obtain $g_1$ and $g_2$ in Lemma~\ref{lemma:major1}.

\subsection{Computational Cost}\label{sec:cost}
The computation cost of the MM algorithm mainly comes from the evaluation of matrix functions, including square root, logarithm, inverse square root, $g_1(\bX)$ and $g_2(\bX)$.

The standard way of calculating matrix functions is through Schur decomposition~\cite{Higham2008}. For positive definite matrices, Schur decomposition is equivalent to eigenvalue decomposition and the matrix function is given by the matrix multiplication in \eqref{eq:matrixfunction}. Therefore, eigenvalue decomposition is the main computational cost in each step of the MM algorithm.

Now we will calculate the number of eigenvalue decompositions in the MM algorithm. Assuming that for all $1\leq i\leq n$, $\bA_i^{-\frac{1}{2}}$ and $\bA_i^{\frac{1}{2}}$ are computed in advance, then in each iteration we need to calculate matrix functions for $\bA_i^{-\frac{1}{2}}\bX_k\bA_i^{-\frac{1}{2}}$ ($g_1$, $g_2$), $f_2(\bX_k)$ (square root) and $f_2(\bX_k)^{\frac{1}{2}}f_1(\bX_k)f_2(\bX_k)^{\frac{1}{2}}$ (inverse square root). Therefore, the algorithm requires $n+2$ eigenvalue decompositions in each iteration. 

\section{Simulations}\label{sec:simulations}
There are many other algorithms for computing the Karcher mean of positive definite matrices, but the gradient descent and its variants are more commonly used. Indeed, \cite{JeurVV2012} gave a extensive survey on various algorithms such as the steepest descent method (SD), the conjugate gradient method (CG), Riemannian BFGS method (RBFGS), and the trust region method (TR) with the Armijo line search technique. It is shown that while CG has a similar performance as SD, the second order methods, including RBFGS and TR, are outperformed by SD and CG when the size of matrices increases.

In this section we compare the MM algorithm with a linearized gradient descent algorithm with a Richardson-like iteration~\cite{Bini2013}: let the Cholesky decomposition of $\bX_k$ be $\bX_k=\bR_k^T\bR_k$, then
\begin{align}\nonumber
&\bX_{k+1}=\bX_k-\nu_k \bR_k^T\sum_{i=1}^n \log(\bR_k^{-1\,T}\bA_i\bR_k^{-1})\bR_k,\label{eq:toolbox}
\end{align}
with $\nu_k$ is chosen to be the optimal value  \cite[(9)]{Bini2013}.
 We use the code available at http://bezout.dm.unipi.it/software/mmtoolbox/, and we referred the algorithm as ``Toolbox'' in the simulations. We also compare the MM algorithm with the gradient descent method (GD)~\cite{Xavier06} with a line search procedure, which is described in Algorithm~\ref{alg:nhbd}, and inner iterations are used to find the smallest $j$. We remark that the line search implementation is slightly different from Armijo's rule, so it might not perform as well and there is no guarantee on the convergence to the global minimizer. In this sense, this implementation is not optimal and it is just for illustrative purposes.  


\begin{algorithm}[htbp]
\caption{Implementation of Gradient Descend with Line Search} \label{alg:nhbd}
\begin{algorithmic}
\REQUIRE $\bA_1, \bA_2, \cdots, \bA_n \subseteq
\SS_+(p)$: $\nu$: start step size, $c$: control parameter size
\ENSURE $\bX$: the Karcher mean.\\
\textbf{Steps}:
\STATE
     $\bullet$ $\bX_1=\frac{1}{n}\sum_{i=1}^n\bA_i$, $k=1$
    \REPEAT \STATE
     $\bullet$ Let $\bD=\frac{1}{n}\sum_{i=1}^n\log(\bX_k^{-\frac{1}{2}}\bA_i\bX_k^{-\frac{1}{2}})$\\
      $\bullet$ Find the smallest $j>0$ such that $F(\bX_k^{\frac{1}{2}}\exp(c^j\nu \bD)\bX_k^{\frac{1}{2}})<F(\bX_k)$, and let $\bX_{k+1}=\bX_k^{\frac{1}{2}}\exp(c^j\nu \bD)\bX_k^{\frac{1}{2}}$\\
      $\bullet$ $k=k+1$
     \UNTIL Convergence
\end{algorithmic}
\end{algorithm}

The main computation costs of MM, GD and Toolbox algorithms are presented in Table~\ref{tab:comparison}, which includes all steps that have a computational cost of $O(p^3)$. In this sense, all three methods have a computational cost of $O(p^3)$ per iteration (we compare inner iterations of GD algorithm with the iterations in MM and Toolbox algorithms). While it is generally difficult to compare the empirical computational cost without looking into the implementation, we highlight all the matrix functions that require an iterative procedure with each iteration costs $O(p^3)$, since they are more computational expensive than other steps in Table~\ref{tab:comparison}. In our implementations, these computational expensive steps are usually calculated by eigenvalue decomposition with \eqref{eq:matrixfunction}, though there might exist faster implementations, especially for the inner iteration of GD, where only the eigenvalues of $\{\exp(c^j\nu\bD)^{-\frac{1}{2}}\bA_i\exp(c^j\nu\bD)^{-\frac{1}{2}}\}_{i=1}^n$ are needed (that being said, finding eigenvalues still requires an iterative procedure and it is more expensive than matrix multiplication).

Following this implementation, all three algorithms have similar empirical computational complexities per iteration. Their computational costs are mostly from eigenvalue decompositions for the highlighted steps in Table~\ref{tab:comparison}. GD algorithm has $n+1$ such steps per inner iteration and $n+1$ such steps per out iteration; MM algorithm has $n+2$ such steps per iteration (note that the calculation of $g_1$ and $g_2$ can share one eigenvalue decomposition); Toolbox algorithm has $n$ such steps per iteration.

However, the MM algorithm requires more matrix multiplication steps per iteration, compared to GD and Toolbox algorithms. Therefore, the total computational cost depends on the ratio between the computational cost of matrix multiplication and the computational cost of matrix functions highlighted in Table~\ref{tab:comparison}. In our configuration (MATLAB R2014a, Windows 10 64 bits, i5-6300U), the matrix multiplication between two $100\times 100$ matrices takes about $0.13$ milliseconds, finding the eigenvalues of a $100\times 100$ matrix takes about $0.55$ milliseconds, and finding both the eigenvalues and the eigenvectors of a $100\times 100$ matrix takes about $0.96$ milliseconds.

\begin{table*}[htbp]
\centering \caption{\small {Comparison of computational costs in terms of the number of eigenvalue decompositions.}}\label{tab:comparison}
\begin{center}
\begin{tabular}{ l | l }
  \hline
  Algorithm& major computation steps\\
  \hline
  MM & \textbf{$\mathbf{g_1}$ and $\mathbf{g_2}$} of $\{\bA_i^{-\frac{1}{2}}\bX_k\bA_i^{-\frac{1}{2}}\}_{i=1}^n$\\
  MM& \textbf{square root} of $f_2(\bX_k)$ \\
  MM & \textbf{inverse square root} of $f_2(\bX_k)^{\frac{1}{2}}f_1(\bX_k)f_2(\bX_k)^{\frac{1}{2}}$ \\
    MM& additional $5n+4$ matrix multiplications\\
    \hline
 GD, outer iteration & \textbf{square root / inverse square root }of $\bX_k$\\
 GD, outer iteration & \textbf{matrix logarithm} of $\{\bX_k^{-\frac{1}{2}}\bA_i\bX_k^{-\frac{1}{2}}\}_{i=1}^n$\\
 GD, outer iteration & additional $2n$ matrix multiplications\\
 \hline
   \multirow{2}{*}{GD, inner iteration} & \textbf{matrix exponential} of $c^j\nu\bD$, and \\
 & \textbf{inverse square root} of $\exp(c^j\nu\bD)$\\
        GD, inner iteration & \textbf{find eigenvalues} of $\{\exp(c^j\nu\bD)^{-\frac{1}{2}}\bA_i\exp(c^j\nu\bD)^{-\frac{1}{2}}\}_{i=1}^n$\\
        GD, inner iteration & additional $2n$ matrix multiplications\\ 
        \hline
             Toolbox & Cholesky decomposition of $\bX_{k-1}$\\
        Toolbox & matrix inversion of $\bR_k$\\
        Toolbox & \textbf{matrix logarithm }of $\{\bR_k^{-1\,T}\bA_i^{-1}\bR_k^{-1}\}_{i=1}^n$\\
             Toolbox & additional $2n+2$ matrix multiplications\\
\hline
\end{tabular}
\end{center}
\end{table*}

Therefore, each inner iteration of GD algorithm has a similar computational complexity as an iteration of MM or Toolbox. For a fair comparison, the number of inner iterations of the GD algorithm is used in the following simulations.

For simulations, we generate the data set $\bA_1, \bA_2, \cdots, \bA_n$ by the following scheme: $\bA_i=\bU_i\bS_i\bU_i^T$, where $\bU_i$ are random orthogonal matrices (generated by MATLAB command ``orth(rand(p,p))''), and $\bS_i$ are diagonal matrices with entries sampled differently for different simulations. All algorithms are initialized with the arithmetic mean $\frac{1}{n}(\bA_1+\bA_2+\cdots+\bA_n)$. The parameters $\nu$ and $c$ in the GD algorithm are set to be $c=\frac{1}{2}$ and $\nu=\frac{1}{5},\frac{1}{3},1,3,5$.

For the first simulation, the diagonal entries of $\bS_i$ are sampled from a uniform distribution in $[1,10]$, so that the condition number of $\bA_i$ is smaller than $10$. We run the simulations with two  settings $p=n=10$ and $p=n=40$, and the mean error of each iteration over $100$ runs, defined by \begin{equation}\|\sum_{i=1}^n\log(\bX_k^{-\frac{1}{2}}\bA_i\bX_k^{-\frac{1}{2}})\|_F,
\label{eq:precision}\end{equation} is visualized in Figure~\ref{fig:compare1}. We remark that the ideal measure would be $\|\bX_k-\hat{\bX}\|_F$ or $F(\bX_k)-F(\hat{\bX})$, where $\hat{\bX}$ is the global minimizer. However, we do not know the exact $\hat{\bX}$, and this gradient-based measure is used as an alternative (similar measure is used in~\cite[Figure 4.6(c)]{JeurVV2012}).

Figure~\ref{fig:compare1} shows that the convergence rate of the MM algorithm is similar to Toolbox, and slower (but still comparable) than the GD algorithm with the best choice of parameter, i.e., when $\nu=1$. However, the precision of the GD algorithm is not as good as MM or Toolbox, and we remark that similar accuracy is also observed for the ``steepest descent'' implementation in \cite{JeurVV2012}.

To investigate the performance of these algorithms further, an instance of the simulation for $p=n=10$ is recorded in Table~\ref{tab:comparison1}. Some rows in the ``GD algorithm'' column are left empty when more than one inner iterations are used to find the step size $j$, for example, it is shown that for $\nu=3$, usually $2$ inner iterations are needed to find $j$. From this table we can see that the choice of the step size $\nu$ is important for GD algorithm: if it is too small, then the convergence is slow; if it is too large then more than one inner iterations are needed to choose the step size, which also makes the algorithm slower. In the examples in Figure~\ref{fig:compare1}, $\nu=1$ is a good choice. However, $\nu=1$ might not be the best choice for all data sets, which will be exemplified in the next simulation.

 \begin{figure}
 \begin{center}
 \includegraphics[width=0.45\textwidth]{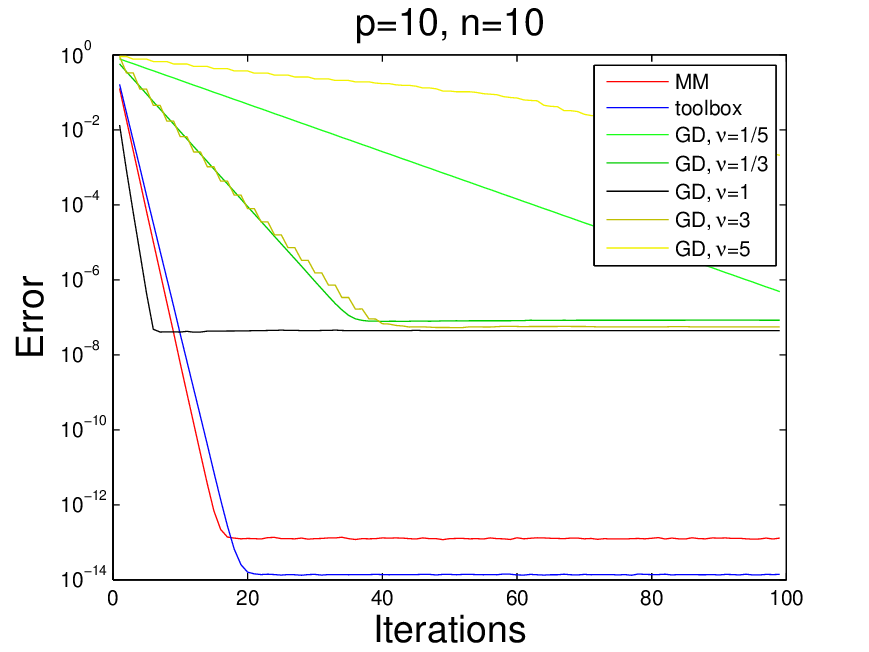}
 \includegraphics[width=0.45\textwidth]{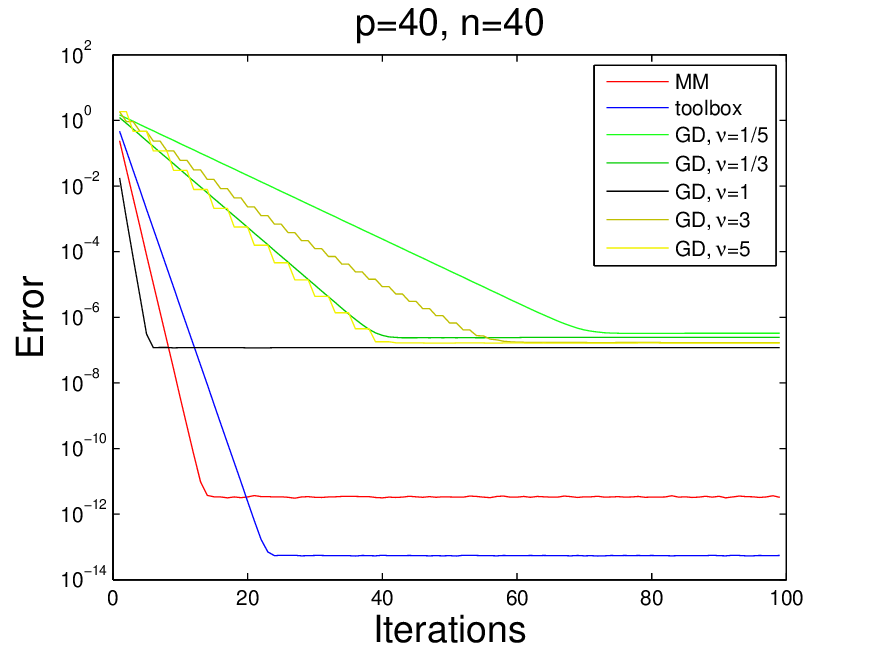}
 \caption{\it The performance of algorithms, where the $x$-axis and the $y$-axis correspond to the number of iterations and error measured by \eqref{eq:precision} respectively.\label{fig:compare1}}
 \end{center}
 \end{figure}

\begin{table*}[htbp]
\centering \caption{\small {The logarithmic errors (with base $10$) of MM, GD and Toolbox algorithms in the first $20$ iterations.}}\label{tab:comparison1}
\begin{center}
\begin{tabular}{ |l| l | l | l | l | l |l |l | }
  \hline
\multirow{2}{*}{iterations}&  \multirow{2}{*}{MM} &\!\multirow{2}{*}{Toolbox}\! &\multicolumn{5}{c|}{GD}\\
&&& \multicolumn{1}{c}{$\nu\!=\!1/5$}& \multicolumn{1}{c}{$\nu\!=\!1/3$}& \multicolumn{1}{c}{$\nu\!=\!1$}& \multicolumn{1}{c}{$\nu\!=\!3$}& \multicolumn{1}{c|}{$\nu\!=\!5$}\\\hline
1&-1.05&-0.94&-0.2&-0.34&-1.98&-0.14&-0.14\\
2&-1.93&-1.74&-0.26&-0.54&-3.27&-0.57&\\
3&-2.8&-2.54&-0.33&-0.74&-4.5&&-0.2\\
4&-3.66&-3.33&-0.39&-0.94&-5.71&-1.01&\\
5&-4.52&-4.13&-0.45&-1.14&-6.91&&\\
6&-5.38&-4.92&-0.52&-1.34&-7.91&-1.44&-0.27\\
7&-6.24&-5.72&-0.58&-1.54& & &\\
8&-7.09&-6.51&-0.65&-1.74& &-1.86&\\
9&-7.95&-7.3&-0.71&-1.94& & &-0.34\\
10&-8.81&-8.1&-0.77&-2.14& &-2.28&\\
11&-9.67&-8.89&-0.84&-2.34& &&\\
12&-10.52&-9.68&-0.9&-2.54& &-2.7&-0.4\\
\hline
\end{tabular}
\end{center}
\end{table*}


In the next simulation, the goal is to find out the performance of the algorithms for matrices with large condition numbers. We let $p=n=10$ and the diagonal entries of $\bS_i$ be a geometric series $10^0, 10^{a}, 10^{2a},\cdots, 10^{9a}$. The results of GD, MM and Toolbox algorithms for the settings $a=0.3, 0.5, 0.7, 0.9$ are visualized in Figure~\ref{fig:compare2}. There are two main observations from this simulation. First, there is no consistent choice of $\nu$ that makes GD perform well. In comparison, MM algorithm and Toolbox are parameter-free and always converge in a reasonable rate. Second, While the convergence rates of all algorithms suffer from the large condition numbers, MM algorithm converges faster than the Toolbox algorithm.

 \begin{figure}
 \begin{center}
 \includegraphics[width=0.45\textwidth]{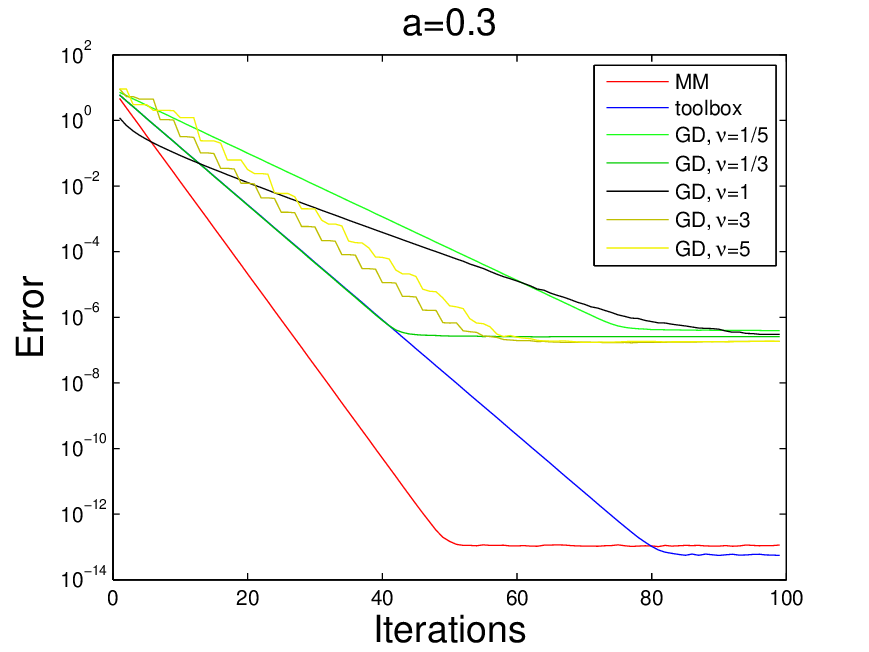}
 \includegraphics[width=0.45\textwidth]{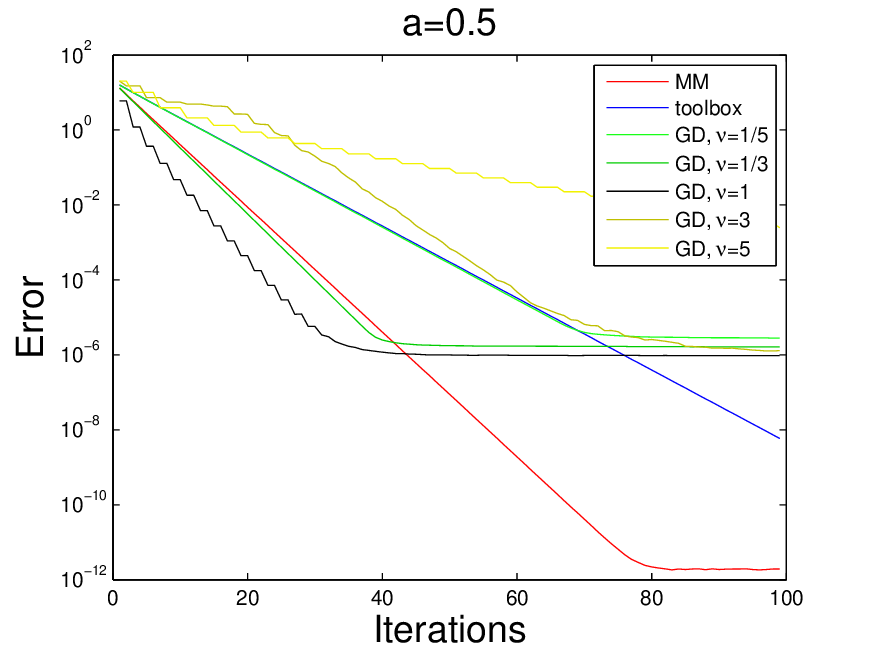}
  \includegraphics[width=0.45\textwidth]{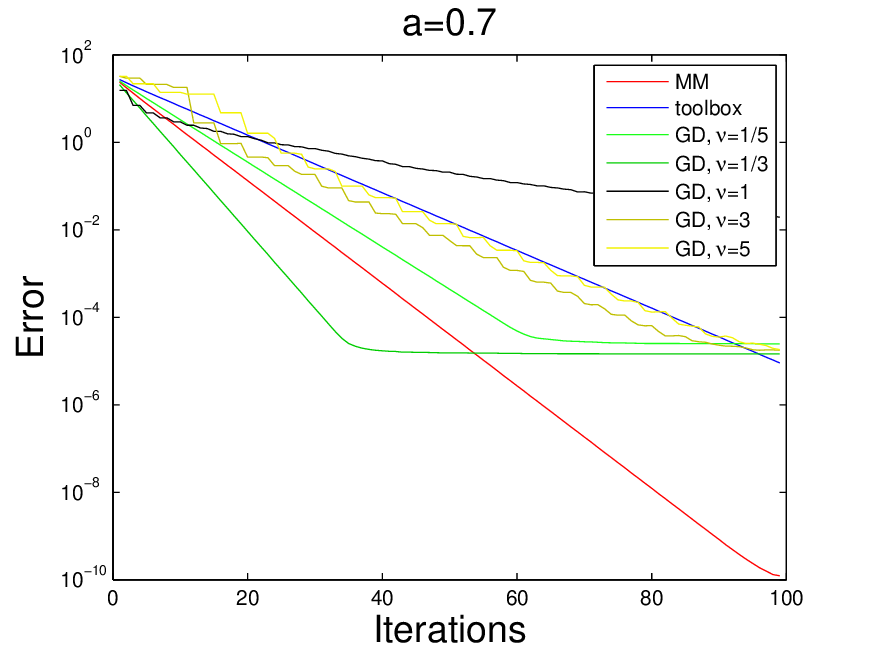}
 \includegraphics[width=0.45\textwidth]{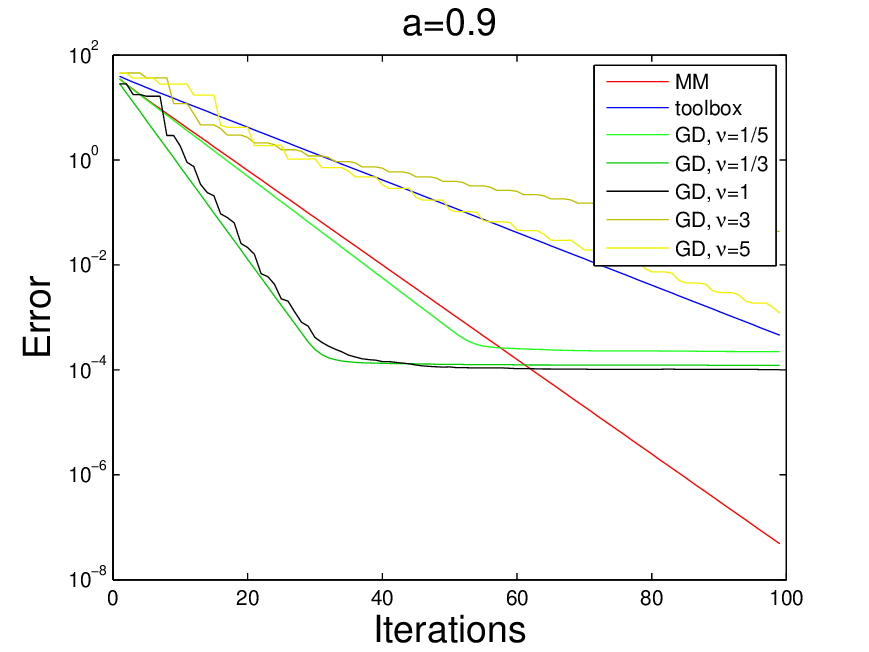}
 \caption{\it The performance of algorithms for a set of matrices with large condition numbers.\label{fig:compare2}}
 \end{center}
 \end{figure}

In simulations we also observed that the convergence rate of MM algorithm is slower when the matrices $\bA_i$ have different scalings, i.e, when one of them is much larger than the other. 
We use the simulation in the left figure of  Figure~\ref{fig:compare1}, and multiply $\bA_1$ by $10^4$. The performance of various algorithms is visualized in the left figure of Figure~\ref{fig:compare3}, which shows that MM algorithm has a slower convergence in the second case. 

We also repeat the simulation in the left figure of  Figure~\ref{fig:compare1}, with $n=200$ (instead of $n=10$) and $p=10$, and record the performance of various algorithm in the right figure of Figure~\ref{fig:compare3}. It shows that MM algorithm is capable of handling a larger number of $n$ without sacrificing much accuracy or convergence rate. However, similar to Figure~\ref{fig:compare1}, its accuracy is not as good as the Toolbox algorithm.
 \begin{figure}
 \begin{center}
 \includegraphics[width=0.45\textwidth]{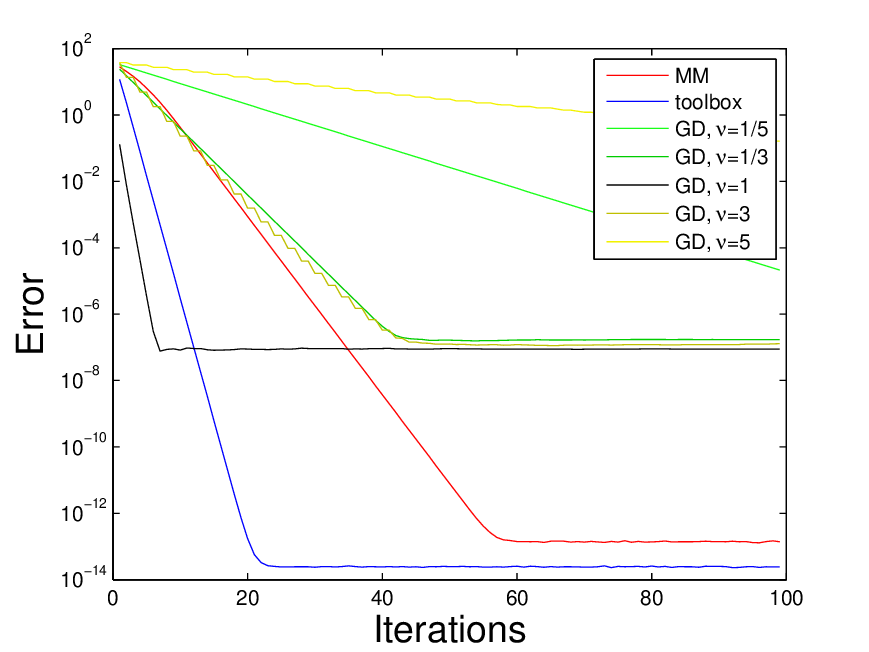}
 \includegraphics[width=0.45\textwidth]{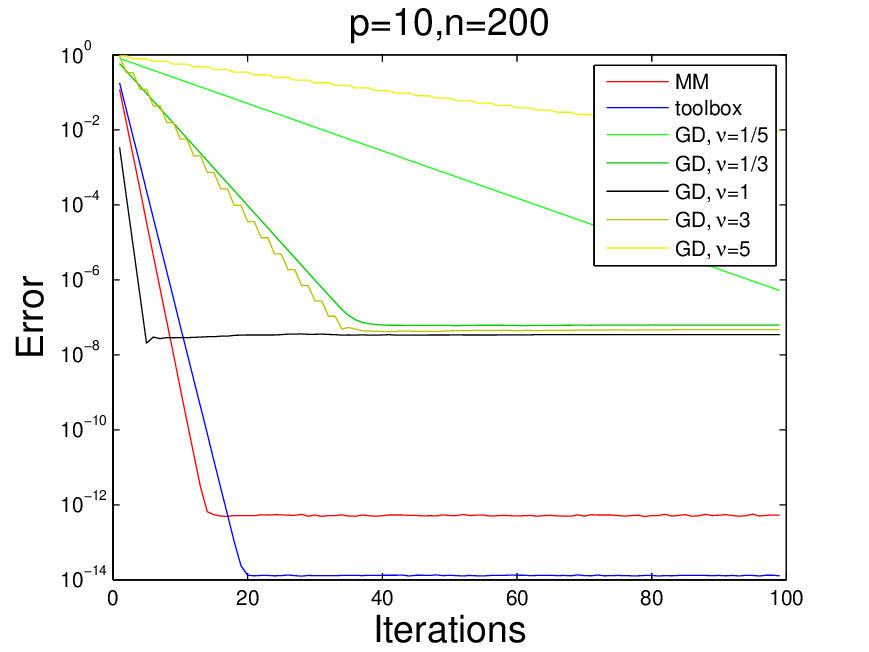}
 \caption{\it Left: the performance of algorithms for matrices with different scalings. Right: the performance of algorithms for $n=200$ and $p=10$.\label{fig:compare3}}
 \end{center}
 \end{figure}

\section{Conclusion}
This paper has presented a novel algorithm for computing the Karcher mean of positive definite matrices based on the majorization-minimization (MM) principle. The MM algorithm is simple to implement and has a theoretical convergence guarantee. Compared with the standard gradient descent algorithm, this algorithm does not need to choose a step size in each iteration. Compared with the linearized gradient descent algorithm in~\cite{Bini2013}, it has a global convergence guarantee and from the experiments considered in the paper, it converges faster when the condition numbers of the matrices are large. However, the accuracy of the MM algorithm is not as good, which might be due to the implementation.

There are some possible future directions arising from this work. First, the MM algorithm strongly depends on the choice of the majorization function (in our case, the function $G(\bX,\bX')$), and it would be interesting to investigate that if other majorization functions give better performance. Second, it would also be interesting to apply the framework of MM algorithms to other manifold optimization problems.

\bibliographystyle{abbrv}
\bibliography{bib-online}
\end{document}